\DeclareMathOperator{\Pic}{\mathrm{Pic}}
\begin{document}

\title[]
{\textbf{ Strictly nef bundles }}
\author{Priyankur Chaudhuri}
\address{Department of Mathematics, University of Maryland, College Park, MD 20742}
\email{pchaudhu@umd.edu}

\begin{abstract}
In this short note we will show that every homogeneous strictly nef vector bundle on a complex flag variety is ample. We also consider the question of when ampleness of vector bundles can be tested on curves.
\end{abstract}

\maketitle\section{Introduction} 

A line bundle $L$ on a projective variety $X$ is called \emph{strictly nef} if $\deg(L|_{C}) > 0$ for all integral curves $C \subset X$. A vector bundle $ \mathcal{E}$ on X is called strictly nef if the corresponding hyperplane bundle  $\mathcal{O}_{\mathbb{P} \mathcal{E}}(1)$ is strictly nef on  $\mathbb{P} \mathcal{E}$, where $\mathbb{P}\mathcal{E}$ is the projective bundle over $X$ associated to $\mathcal{E}$. There are old examples of Mumford and Ramanujam (example 10.6, page 56 and example 10.8, page 57 in \cite{Hartshorne1}) which show that a strictly nef (even effective) line bundle needn't be ample. However, it is known that strictly nef line bundles on homogeneous varieties are ample. For abelian varieties, this is Prop 1.4 in \cite{Serrano}, whereas for flag varieties (being Fano), it follows from the basepoint free theorem (Theorem 3.3 in \cite{KM}): if $L$ is a strictly nef line bundle on a smooth Fano variety $X$, then $L-K_X$ is ample, hence $L$ is semiample. Now, it is easy to see that $L$ must be ample: replacing $L$ by a positive tensor power, we may assume that $L$ is globally generated. Let $\phi: X \rightarrow Y$ be the morphism given by the linear system $|L|$. Then $L=\phi^*(A)$ for some ample $A \in \Pic Y$. If $\phi(C)=pt$ for some curve $C \in X$, then $L|_C=\phi^*(A)|_C =\mathcal{O}_C$ which can't be since $L$ is strictly nef. Thus $\phi$ is finite and $L$, being a finite pullback of an ample divisor is also ample.\\

Given all this, it is natural to wonder what happens for strictly nef bundles of higher rank on homogeneous varieties. 
We will see in Section 1 that strictly nef bundles on a flag variety which are homogeneous, are indeed ample. In Section 2, we will consider question 2.5 in \cite{BHN} which asks whether for vector bundles on an abelian variety, ampleness on curves implies ampleness.
\section {Homogeneous bundles on flag varieties}

	Let $G$ be a complex semisimple Lie group, $B$ be a fixed Borel subgroup containing a maximal torus $T$. Let $R$ be the set of roots of $G$ with respect to $T$ and let $R_-$ denote the set of roots in the Lie algebra of $B$, the so-called negative roots. Let $P$ be a parabolic subgroup defined by a subset $I$ of the set of simple positive roots $ \{\alpha_1,....,\alpha_l \}$. For any root $\alpha$ of $\mathfrak{g}$ with respect to $\mathfrak{t}$, we can choose a representative $X_{\alpha}$ of the one-dimensional root space $\mathfrak{g}_{\alpha}$ such that if $Z_{\alpha}:=[X_{\alpha},X_{-\alpha}]$, then $\alpha(Z_{\alpha})=2$.  Let ${\lambda_{1},..., \lambda_{l}}$ be the \emph{fundamental weights}, i.e., such that $\langle\lambda_{i}, \alpha_{j}\rangle=\delta_{ij}$.     Here $ \langle, \rangle$ denotes the canonical pairing between roots and weights ( see page 6 of \cite {Snow}). Then any arbitrary weight $\lambda$ can be written as $\lambda = n_{1}\lambda_{1}+...+n_{l}\lambda_{l}$ where $n_i \in \mathbb{Z}$ $\forall i. $  If $n_{i}\geq 0$ $\forall i,$ then we say $\lambda$ is \emph{dominant}. Let $\Lambda^{+}$ denote the set of dominant weights. There exists a partial order on the weights: $\mu < \lambda$ if $\lambda - \mu$ is a positive (possibly 0) linear combination of dominant weights. Weights maximal with respect to this partial order will be called \emph{maximal} weights. \\

Let $\mathcal{E}= G\times^{P}E_{0}$ be a homegeneous vector bundle on $X$=$G/P$ given by a $P$ module $E_{0}$. In what follows, $\Lambda(E_{0})$ will denote the weights of $E_{0}$ and $\Lambda _{\mathrm{max}}(E_{0})$ will denote the subset of maximal weights. Here are a few facts we will be using:\\

(A) Corollary 11.5 on page 47 of \cite{Snow}  says: Let $\mathcal{E}= G\times^{P}E_{0}$ be a homogeneous vector bundle on X = $G/P$ where $P$ is a parabolic subgroup of $G$. If $\Lambda_{\mathrm{max}}(E_{0})\subset\Lambda^{+}$, then $  \mathcal{E}$ is generated by its global sections.\\

(B) We will also need the following well known fact: If $L(\lambda)\in \Pic G/P$ is the homogeneous line bundle corresponding to the weight $\lambda$ (see beginning of page 18 of \cite{Snow} for details on the correspondence) and if $C(\alpha)\cong\mathbb{P}^{1}$ is the rational curve in $G/P$ corresponding to the roots $\pm\alpha$ (see the proof of Prop 4.4 in page 231 of \cite{Jant} for details. As a flag variety, $C(\alpha)= S_{\alpha}/B_{\alpha}$, where $\mathfrak{s}_{\alpha}:= \mathbb{C}Z_{\alpha}+\mathfrak{g}_{\alpha}+\mathfrak{g}_{-\alpha}$ and $\mathfrak{b}_{\alpha}:=\mathbb{C}Z_{\alpha}+\mathfrak{g}_{-\alpha}$.), then $\deg L(\lambda)|_{C(\alpha)}= \langle \lambda,\alpha \rangle$. 

\newtheorem{thm}{Theorem} 
\begin{thm}
 If a homogeneous vector bundle $\mathcal{E}$ on $G/P$ is strictly nef, then it is globally generated and hence ample.
 \end{thm}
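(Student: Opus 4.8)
The plan is to reduce everything to the global generation criterion (A). Granting that $\mathcal{E}$ is globally generated, I first observe that this already forces ampleness: the tautological quotient line bundle $\mathcal{O}_{\mathbb{P}\mathcal{E}}(1)$ has $H^{0}(\mathbb{P}\mathcal{E},\mathcal{O}_{\mathbb{P}\mathcal{E}}(1))=H^{0}(X,\mathcal{E})$, and global generation of $\mathcal{E}$ makes these sections base-point free on $\mathbb{P}\mathcal{E}$. They therefore define a morphism $\phi\colon\mathbb{P}\mathcal{E}\to\mathbb{P}^{N}$ with $\phi^{*}\mathcal{O}(1)=\mathcal{O}_{\mathbb{P}\mathcal{E}}(1)$. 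A curve is contracted by $\phi$ exactly when it meets $\mathcal{O}_{\mathbb{P}\mathcal{E}}(1)$ in degree zero, which strict nefness forbids; hence $\phi$ is finite and $\mathcal{O}_{\mathbb{P}\mathcal{E}}(1)$, being the pullback of an ample bundle under a finite map, is ample. By definition $\mathcal{E}$ is then ample. Thus the theorem reduces, via (A), to proving $\Lambda_{\mathrm{max}}(E_{0})\subset\Lambda^{+}$.

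Fix a maximal weight $\lambda$; the goal is $\langle\lambda,\alpha_{i}\rangle\geq 0$ for every simple root $\alpha_{i}$, and I treat $\alpha_{i}\in I$ and $\alpha_{i}\notin I$ separately. The content of maximality that I will use is that $\lambda$ is a top weight: $\lambda+\alpha_{i}\notin\Lambda(E_{0})$ for every simple $\alpha_{i}$. For $\alpha_{i}\in I$ this is enough on its own: $E_{0}$ is a genuine $P$-module, hence stable under the Levi factor, so its $\alpha_{i}$-strings are full $\mathfrak{sl}_{2}$-strings and symmetric about the wall; a symmetric string whose top is $\lambda$ must satisfy $\langle\lambda,\alpha_{i}\rangle\geq 0$. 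Geometrically these roots are invisible, since $C(\alpha_{i})$ with $\alpha_{i}\in I$ collapses to a point of $G/P$, so no curve on $X$ can detect them.

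The remaining roots $\alpha_{i}\notin I$ are exactly the ones for which $C(\alpha_{i})\cong\mathbb{P}^{1}$ is a genuine curve in $G/P$, and here the strict nefness of $\mathcal{E}$ does the work. Restricting, $\mathcal{E}|_{C(\alpha_{i})}$ is strictly nef, and on $\mathbb{P}^{1}$ this already gives ampleness: if $\mathcal{E}|_{C(\alpha_{i})}=\bigoplus_{k}\mathcal{O}(a_{k})$ then projection to a summand is a section of the projective bundle along which $\mathcal{O}_{\mathbb{P}\mathcal{E}}(1)$ has degree $a_{k}$, so strict nefness forces every $a_{k}>0$ and every line-bundle quotient to have positive degree. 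Now the top weight space of $\lambda$ produces an equivariant line-bundle quotient of $\mathcal{E}|_{C(\alpha_{i})}$ isomorphic to $L(\lambda)|_{C(\alpha_{i})}$, whose degree is $\langle\lambda,\alpha_{i}\rangle$ by (B). Positivity of quotient degrees then gives $\langle\lambda,\alpha_{i}\rangle>0$. Combining the two cases yields $\lambda\in\Lambda^{+}$, and (A) finishes the proof.

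The main obstacle is the representation-theoretic step hidden in the last paragraph: with the convention of the excerpt, where the Borel carries the negative roots, one must check that a maximal weight really is a top weight and that its weight space descends to the correct equivariant quotient line bundle on $C(\alpha_{i})$, of degree exactly $\langle\lambda,\alpha_{i}\rangle$ and not that of some higher weight in the same $\alpha_{i}$-string. This is precisely the structural analysis of homogeneous bundles underlying (A), and it is where I would expect to spend most of the effort; once it is in place, the geometry (strict nef restricts, and is ample on $\mathbb{P}^{1}$) and the numerics (fact (B)) combine immediately.
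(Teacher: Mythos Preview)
Your proposal is correct and follows the same overall architecture as the paper: reduce to criterion~(A), treat $\alpha_i\in I$ and $\alpha_i\notin I$ separately, and then observe that globally generated plus strictly nef forces $\mathcal{O}_{\mathbb{P}\mathcal{E}}(1)$ ample. The differences are in execution, and they are worth noting because the paper's route dissolves exactly the obstacle you flag.

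For $\alpha_i\in I$ you invoke the Levi $\mathfrak{sl}_2$-action to get symmetric strings and conclude $\langle\lambda,\alpha_i\rangle\ge 0$ for a maximal $\lambda$. The paper instead pulls back along $\pi\colon G/B\to G/P$: for $i\in I$ the curve $C(\alpha_i)$ lies in a fibre of $\pi$, so $(\pi^*\mathcal{E})|_{C(\alpha_i)}$ is trivial and hence $\langle\Lambda_k,\alpha_i\rangle=0$ for \emph{every} weight $\Lambda_k$, not only the maximal ones. For $\alpha_i\notin I$ you isolate a maximal weight and argue that its weight space yields an equivariant line-bundle quotient of degree $\langle\lambda,\alpha_i\rangle$, and you correctly identify this as the delicate step. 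The paper bypasses it: by~(B) the restriction $\mathcal{E}|_{C(\alpha_i)}$ splits as $\bigoplus_k\mathcal{O}(\langle\Lambda_k,\alpha_i\rangle)$, and strict nefness on $\mathbb{P}^1$ forces each summand degree to be positive, so $\langle\Lambda_k,\alpha_i\rangle>0$ for all $k$ at once.

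Thus the paper actually proves that \emph{all} weights of $E_0$ are dominant, never needing to single out maximal ones or build the specific quotient you worry about; your approach is valid but does more representation-theoretic work for a weaker intermediate conclusion (only maximal weights dominant), which is of course still enough for~(A).
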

 \begin{proof}
 Let $\mathcal{E}= G\times^{P}E_{0}$ be given by the $P$ module $E_{0}$ and $P$ correspond to the index set $I$. In what follows, we say $j\notin I $ to mean $\alpha_j\notin I$. Let $ \Lambda(E_0)= \{\Lambda_1,....,\Lambda_r\}$ counted with multiplicities, where $ \Lambda_k = \sum_{i=1}^l n_{i(k)}\lambda_i $. The main idea is to understand the splitting of $\mathcal{E}|_{C(\alpha_j)} \forall j \notin I$. If $\mathfrak{t}$ denotes the Lie algebra of the maximal torus in $X$ and $\mathfrak{t}_{C(\alpha_j)}$ that of the torus in $C_{(\alpha_j)} \forall j \notin I$, then $\mathfrak{t}_{C(\alpha_j)}= \mathbb{C}(Z_{\alpha_j})$. Since $\forall k=1,...,l$, the fundamental weights $\lambda_k$ are linear functionals $\lambda_k: \mathfrak{t} \rightarrow  \mathbb{C}$, the restriction $\lambda_k|_{\mathfrak{t}_{C(\alpha_j)}}$ is determined by $\lambda_k(Z_{\alpha_j})=\langle\lambda_{k}, \alpha_{j}\rangle =\delta_{kj}$. Thus $\Lambda(E_0|_{C(\alpha_j)})$ = \{$n_{j(1)}\lambda_j,....., n_{j(r)}\lambda_j  \}$. Suppose $\mathcal{E}|_{C_{(\alpha_j)}}$ gives a representation $\mathbb{C}Z_{\alpha_j} \rightarrow M_r(\mathbb{C})$ where $ r$ is the rank of $\mathcal{E}$. $\mathbb{C}Z_{\alpha_j}$ being one dimensional, this splits as a direct sum of one dimensional representations given by these weights. Thus $\mathcal{E}|_{C(\alpha_j)}= \mathcal{L}(n_{j(1)}\lambda_j)|_{C(\alpha_j)} \oplus ... \oplus \mathcal{L}(n_{j(r)}\lambda_j)|_{C(\alpha_j)}$, where the direct summands denote the restrictions of the line bundles on $X$ corresponding to the indicated weights. This is $\cong$ $ \mathcal{O}(n_{j(1)}) \oplus....\oplus \mathcal{O}(n_{j(r)})$ $\forall j \notin I $ by (B) above. But since all line bundle quotients of a strictly nef bundle restricted to a curve have positive degree (see Prop 2.1 in \cite {LOY}), thus $n_{j(k)}>0$ $\forall j \notin I$, $\forall k = 1,...,r$. Now consider the projection $G/B \xrightarrow{\pi} G/P$. Since $\pi(C(\alpha_j)) = pt$ $ \forall j\in I$, $(\pi^*\mathcal{E})|_{C(\alpha_j)}$ is trivial $\forall j \in I$ which means that $n_{j(k)}=0$ $ \forall j \in I$, $ \forall k= 1,...,r$ arguing as above. Thus $n_{j(k)} \geq 0$ $\forall j$ and for all weights $\Lambda_k$, $k=1,...,r$ and thus $ \mathcal{E} $ is spanned by (A) above. Now by the canonical surjection $\pi^*\mathcal{E} \xrightarrow{}  \mathcal{O}_{\mathbb{P}\mathcal{E}}(1)$,  $\mathcal{O}_{\mathbb{P}\mathcal{E}}(1)$ is generated by its global sections  and strictly nef, hence ample.
 
 \end{proof}

 \textbf{Remark 1:} In view of Proposition 6.1.8 (ii) of \cite{Laz 2}, the arguments of the above proof show the more general fact that homogeneous bundles on $G/P$ which are nef on the finite set of curves $C(\alpha_j)$ for $j \notin I$ are globally generated. 
 \\
 
 \textbf{Remark 2:} The above theorem has also been proved independently around the same time by \cite{BHN}. (See Theorem 3.1 in \cite{BHN}.)
 
 \section {Ampleness for bundles on curves}
  In this section, we consider question 2.5 of \cite{BHN} which asks (in analogy with the line bundle case): Is a vector bundle on an abelian variety ample if its restriction to every curve is ample? 
 We have the following lemma whose proof is an easy application of Hartshorne's ampleness criterion for vector bundles on a smooth projective curve (Theorem 2.4 in \cite{Hartshorne 2})\\
 
\newtheorem{lem}{Lemma}
\begin{lem}
 Let $ \mathcal{E}$ be a vector bundle on a projective variety $X$. Then $\mathcal{E}$ is ample when restricted to curves iff $ \forall $ finite morphisms $f: C \rightarrow   X$ where $C$ is a smooth projective curve, all vector bundle quotients of $f^*(\mathcal{E})$ are of positive degree.
 \end{lem}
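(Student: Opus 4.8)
The plan is to run both implications through two standard facts: the cited ampleness criterion on smooth projective curves (a bundle $\mathcal{F}$ on a smooth projective curve is ample if and only if every quotient bundle of $\mathcal{F}$ has positive degree), and the functorial behaviour of ampleness under finite morphisms, namely that ampleness is preserved by pullback along an arbitrary finite morphism and descends along a finite surjective one. I read ``$\mathcal{E}$ is ample when restricted to curves'' as the assertion that $\mathcal{E}|_{C}$ is ample for every integral curve $C \subset X$. Note that a finite morphism from a projective curve is automatically nonconstant (its fibres are finite), so its set-theoretic image, taken with the reduced structure, is again an integral curve in $X$.

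For the forward direction I would take a finite morphism $f: C \to X$ with $C$ a smooth projective curve, set $C' = f(C)$, and factor $f$ as $C \xrightarrow{g} C' \hookrightarrow X$ with $g$ finite and surjective, so that $f^*\mathcal{E} = g^*(\mathcal{E}|_{C'})$. By hypothesis $\mathcal{E}|_{C'}$ is ample, hence $g^*(\mathcal{E}|_{C'})$ is ample because ampleness is preserved under finite pullback; applying the curve criterion on the smooth projective curve $C$ then forces every quotient bundle of $f^*\mathcal{E}$ to have positive degree.

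For the converse I would let $C \subset X$ be an arbitrary integral curve, let $\nu: \tilde{C} \to C$ be its normalization (a finite surjective morphism from a smooth projective curve), and write $f = \iota \circ \nu: \tilde{C} \to X$ for the composite with the inclusion $\iota$, which is finite. The hypothesis applied to $f$ says that every quotient bundle of $f^*\mathcal{E} = \nu^*(\mathcal{E}|_{C})$ has positive degree, so the curve criterion makes $\nu^*(\mathcal{E}|_{C})$ ample on $\tilde{C}$; since $\nu$ is finite and surjective, descent of ampleness then yields that $\mathcal{E}|_{C}$ is ample. As $C$ was an arbitrary integral curve, this gives that $\mathcal{E}$ is ample on curves.

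The factorization and the pullback formula $f^*\mathcal{E} = g^*(\mathcal{E}|_{C'})$ are formal and require no real work. The only genuinely nontrivial input beyond the quoted curve criterion is the descent of ampleness along the finite surjective map $\nu$ in the converse direction, that is, passing from ampleness of the pullback on the smooth model $\tilde{C}$ back to ampleness on the possibly singular curve $C$; this is where I would be most careful to invoke the correct form of the statement, namely Hartshorne's result that a vector bundle whose pullback under a finite surjective morphism is ample must itself be ample.
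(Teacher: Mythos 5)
Your proof is correct and follows essentially the same route as the paper's: both directions use Hartshorne's quotient-degree criterion on the smooth curve, with the normalization $\tilde{C}\to C$ plus descent of ampleness along finite surjective morphisms (Lazarsfeld, Prop.\ 6.1.8(iii)) for one implication, and the factorization through the image curve $C'=f(C)$ plus invariance of ampleness under finite pullback for the other. Your write-up is slightly more explicit about the factorization and about which form of the finite-morphism lemma is needed, but the argument is the same.
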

\begin{proof}
$ \impliedby$: If $C\subset X$ is a curve, consider its normalization $\Tilde{C} \xrightarrow{f} C \subset X$. Then $f^*(\mathcal{E})$ is ample on $\Tilde{C}$ by Hartshorne's criterion. Hence $\mathcal{E}|_C$ is also ample by Prop 6.1.8 (iii) in \cite {Laz 2}.
 \\ $\implies$: Let $f: C \rightarrow X $  be a finite morphism from a smooth projective curve and let $C^{'}= f(C)$. Then $\mathcal{E}|_{C^{'}}$ is ample, thus $f^*(\mathcal{E}|_{C^{'}})$ is ample on $C$. By Hartshorne's ampleness criterion on $C$, we are done.
 \end{proof}
 \newtheorem{cor}{Corollary}
 \begin{cor}Let $ X \xrightarrow{\pi}  Y $ be a finite morphism of projective varieties, $\mathcal{E}$ be a bundle on $Y$ that is ample when restricted to curves. Then $\pi^*(\mathcal{E})$ is also ample when restricted to curves.
 \end{cor}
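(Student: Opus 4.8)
The plan is to deduce everything from the Lemma, whose reformulation of ``ample when restricted to curves'' is manifestly stable under precomposition with finite maps. First I would record what the hypothesis gives on the target: since $\mathcal{E}$ is ample when restricted to curves on $Y$, the Lemma tells us that for every finite morphism $h : C \to Y$ from a smooth projective curve $C$, all vector bundle quotients of $h^*\mathcal{E}$ have positive degree. The goal is then to verify the very same property for $\pi^*\mathcal{E}$ on $X$, and conclude by the Lemma applied now on $X$.

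The key step is to take an arbitrary finite morphism $g : C \to X$ with $C$ a smooth projective curve and observe that the composite $\pi \circ g : C \to Y$ is again a finite morphism, being a composition of finite morphisms. By functoriality of pullback, $g^*(\pi^*\mathcal{E}) = (\pi \circ g)^*\mathcal{E}$. Applying the consequence of the hypothesis above to $h = \pi \circ g$, every vector bundle quotient of $(\pi \circ g)^*\mathcal{E}$, and hence of $g^*(\pi^*\mathcal{E})$, has positive degree. Since $g$ was an arbitrary finite morphism from a smooth projective curve to $X$, the Lemma (this time on $X$) yields that $\pi^*\mathcal{E}$ is ample when restricted to curves.

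The only point needing care, and the main (if minor) obstacle, is ensuring that $\pi \circ g$ genuinely satisfies the hypotheses of the Lemma, i.e.\ that it is finite; this holds because a composition of finite morphisms of projective varieties is finite. No degree computation or positivity estimate is required: the entire weight of the argument is carried by the Lemma's criterion, which reduces the corollary to the elementary fact that finite morphisms compose.
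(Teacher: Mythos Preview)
Your proof is correct and follows essentially the same route as the paper's own argument: take a finite morphism $g\colon C\to X$ from a smooth projective curve, note that $\pi\circ g$ is finite, identify $g^*(\pi^*\mathcal{E})$ with $(\pi\circ g)^*\mathcal{E}$, and invoke the Lemma on both ends. If anything, your write-up is a bit more explicit about applying the Lemma twice (once to unpack the hypothesis on $Y$, once to conclude on $X$) than the paper's terse version.
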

 \begin{proof}
 Let $C\xrightarrow{f}   X $ be a finite morphism, where $C$ is a smooth projective curve. Let $ f ^* \pi ^*(\mathcal{E}) \xrightarrow{}   Q \rightarrow   0 $ be a quotient bundle. Now $ \pi \circ f : C \rightarrow   Y$ is finite and $\deg(Q) > 0$ by ampleness of $ f^* \circ \pi^*(\mathcal{E})$. Thus $ \pi^*(\mathcal{E})$ is  ample by above lemma.
 \end{proof} 
 \textbf{Remark 3:} Let $X$ be an $n$-dimensional projective variety. Then $X$ admits a finite surjective morphism, say $f: X \rightarrow \mathbb{P}^n$. If $E$ is a vector bundle on $\mathbb{P}^n$ which is ample on curves, so is $f^*(E)$ by the above corollary. Moreover, by Prop 6.1.8 (iii) in \cite {Laz 2}, $E$ is ample iff $f^*(E)$ is. Thus, if there exists a vector bundle on $\mathbb{P}^n$ for some $n$, which is ample on curves, but not ample, that would negatively answer the question of \cite {BHN}.
\\

\textbf{Acknowledgements:}
  I thank my advisor Patrick Brosnan for suggesting the problem on homogeneous bundles and many helpful conversations and suggestions which improved this paper. Thanks to Krishna Hanumanthu for alerting me about an error in an earlier version of this paper, where I was assuming the bundle in the proof of Prop 5 in \cite{Fulton} to be ample on curves. Thanks also to J\'anos Koll\'ar for a fruitful conversation and to the referee whose comments helped to improve the exposition.
 
 %%%%%%%%%%%%%%%%%%%%%%%%%%%%%%%%%%%%%%%%%%%%%%%%%%

\end{document}